\documentclass[11pt]{article}

\usepackage[margin=1in]{geometry}
\usepackage{amsmath,amssymb,amsthm,mathtools}
\usepackage{enumitem}
\usepackage{hyperref}
\usepackage{microtype}
\newcommand{\pmodtight}[1]{\,(\operatorname{mod} #1)}
\allowdisplaybreaks[2]
\AtBeginDocument{%
  \setlength{\abovedisplayskip}{7pt plus 3pt minus 2pt}%
  \setlength{\belowdisplayskip}{7pt plus 3pt minus 2pt}%
  \setlength{\abovedisplayshortskip}{4pt plus 2pt minus 2pt}%
  \setlength{\belowdisplayshortskip}{5pt plus 2pt minus 2pt}%
}
\makeatletter
\renewcommand\section{\@startsection {section}{1}{\z@}%
  {-2.6ex \@plus -0.8ex \@minus -.2ex}%
  {1.2ex \@plus .2ex}%
  {\normalfont\Large\bfseries}}
\def\thm@space@setup{%
  \thm@preskip=6pt plus 2pt minus 2pt
  \thm@postskip=6pt plus 2pt minus 2pt}
\makeatother

\hypersetup{
  colorlinks=true,
  linkcolor=blue,
  citecolor=blue,
  urlcolor=blue
}

\newtheorem{theorem}{Theorem}[section]

\newtheorem{lemma}[theorem]{Lemma}

\newcommand{\ord}{\operatorname{ord}}

\newcommand{\calD}{\mathcal D}

\newcommand{\calS}{\mathcal S}

\newcommand{\calE}{\mathcal E}

\title{An improved lower bound for odd integers not of the form $p+2^a+2^b$}
\author{Yuchen Ding, Yu-Chen Sun, and Lilu Zhao}
\date{}

\newcommand{\finalauthoraddress}[3]{%
  \par\bigskip
  \begingroup
  \hyphenpenalty=10000
  \exhyphenpenalty=10000
  \tolerance=3000
  \emergencystretch=2em
  \noindent{\normalfont\scshape (#1) #2\par}
  \endgroup
  \noindent\textit{Email address:}\ \texttt{#3}\par
}

\begin{document}
\maketitle

\begin{abstract}
Let $x$ be sufficiently large and
\[
N(x)=\big|\bigl\{n\le x:n\ \text{is odd and }n\ne p+2^a+2^b \textrm{ with } p 
\text{ a prime and } a,b\in \mathbb{N}\bigr\}\big|.
\]
Motivated by Crocker's result
\[
N(x)\gg \log\log x,
\]
Erd\H os repeatedly asked whether there is an absolute constant $c_0$ such that $N(x)>c_0x$. Pan \cite{Pan} proved in 2011 that
\[
N(x)\gg x\exp\!\left(
  -C_0\frac{\log\log\log\log x}{\log\log\log x}\log x
\right), 
\]
where $C_0>0$ is an absolute constant.
We improve on Pan's result by showing that, given any $\eta>0$, for all
sufficiently large $x$,
\[
  N(x)\gg_\eta x\exp\left(-(4+\eta)\frac{\log\log\log x}{\log\log x}\log x\right).
\]
\end{abstract}

\noindent\textbf{2020 Mathematics Subject Classification.}
Primary 11P32; Secondary 11A07, 11B25, 11N35, 11N36.

\noindent\textbf{Keywords.}
Prime; Fermat number; cyclotomic polynomial; multiplicative order.

\section{Introduction}

Unaware of the earlier correspondence between Euler and Goldbach \cite{Euler}, de Polignac \cite{Polignac1} conjectured in 1849 that every odd number greater than $3$ can be written as the sum of a prime and a power of two. Soon afterward, he \cite{Polignac2} realized that $127$ is a counterexample, and he further noted another counterexample, $959$, which had been pointed out by Euler. Motivated by this problem, Romanoff \cite{Romanoff} proved that the set of odd numbers which can be written as the sum of a prime and a power of two has positive lower asymptotic density. Conversely, van der Corput \cite{Corput} proved that the set of odd numbers which cannot be written as the sum of a prime and a power of two also has positive lower asymptotic density. Shortly thereafter, Erd\H os \cite{Erdos2} constructed an infinite arithmetic progression all of whose elements cannot be written as the sum of a prime and a power of two. In his construction, Erd\H os introduced covering congruence systems, which had a profound influence on subsequent research in combinatorial number theory.

Let $x$ be sufficiently large and
\[
N(x)=\big|\bigl\{n\le x:n\ \text{is odd and }n\ne p+2^a+2^b \textrm{ with } p \in \mathcal{P}\ \text{ and } a,b\in \mathbb{N}\bigr\}\big|,
\]
where $\mathcal{P}$ and $\mathbb{N}$ denote the sets of prime and natural numbers, respectively.
By an unusual refinement of Erd\H os' construction, Crocker \cite{Cr} proved that
\[
N(x)\gg \log\log x.
\]
Erd\H os \cite{Er77,Er80,EG,Er85,Er92,Er95,Er97-1,Er97-2} paid much attention to the size of $N(x)$, and repeatedly asked whether there is an absolute constant $c_0>0$ such that $N(x)>c_0x$. Erd\H os believed covering congruence systems no longer work here. Granville-Soundararajan \cite{GS} and Chen-Feng-Templier \cite{CFT} gave some conditional results on $N(x)$.  Answering a problem of Guy \cite[Line 1 of Page 43, A19]{Guy} in the affirmative, Pan \cite{Pan} proved
\[
N(x)\gg x\exp\left(-C_0\frac{\log\log\log\log x}{\log\log\log x}\log x\right),
\]
where $C_0>0$ is an absolute constant. This gives a lower bound of the
form $x^{1-o(1)}$. In particular, Pan's result implies
$N(x)>x^{1-\varepsilon}$ for any $\varepsilon>0$, provided that $x$ is
sufficiently large. On the other hand, Gallagher \cite{Gallagher} proved that
the density of odd integers which may be written in the form
\[
n=p+2^{a_1}+2^{a_2}+\cdots +2^{a_k} \qquad (p\in \mathcal{P},\ a_i\in \mathbb{N},\ 1\le i\le k)
\]
tends to $1$ as $k\to\infty$.

Our main result improves Pan's lower bound as follows.

\begin{theorem}\label{thm:uncond}
Let $\eta>0$ be a fixed number. Then for all sufficiently large $x$ we have
\[
  N(x)\gg_\eta x\exp\left(-(4+\eta)\frac{\log\log\log x}{\log\log x}\log x\right).
\]
\end{theorem}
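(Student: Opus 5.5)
We follow Crocker's and Pan's strategy: produce many odd $n\le x$ for which every $n-2^a-2^b$ (with $2^a+2^b<n$) is either composite, because it has a prescribed small prime factor, or is too small to be prime. Fix a parameter $L$ (roughly $\log\log x$, up to constants) and take $M$ to be a product of many small primes, e.g.\ $M=\prod_{q\le y} q$ with $y\asymp \log L$. Consider the Fermat-type primes $p_k\mid 2^{2^k}+1$ for $k\le K$, together with primes $r$ for which $\ord_r(2)\mid 2^{K}M$. By cyclotomic factorization, $2^{2^K M}-1=\prod_{d\mid 2^K M}\Phi_d(2)$, and each $\Phi_d(2)$ has a primitive prime divisor by Zsigmondy. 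This gives a set $\calS$ of primes whose product $P$ has $\log P\ll 2^K M$ (roughly $\sum_{d\mid 2^KM}\phi(d)\log 2$). The size of this modulus is what governs the loss in the final bound.

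The core step is a covering-type condition on the pairs $(a,b)$ modulo $T:=2^K M$. We want residues $n\bmod r$ ($r\in\calS$) such that for every pair of residue classes $(a \bmod T,\,b\bmod T)$ some $r\in\calS$ with $\ord_r(2)\mid T$ satisfies $n\equiv 2^a+2^b \pmod r$. Crocker's argument handles $a$ via Fermat primes when $b$ is fixed by the size condition. Full coverage in both variables is impossible in general, which is the reason Erd\H os doubted the method. Following Pan, we therefore do not require coverage for all pairs. We only require it for pairs with $a$ or $b$ in a restricted range, for instance $b\le B$ with $B$ small, and we handle the remaining pairs by a counting argument. For each fixed small $b$, we use $T/|\{\text{small }b\}|$-many primes with primitive divisors of $\Phi_d(2)$ for distinct $d$ to cover the classes of $a$ via the Chinese remainder theorem. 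The key counting step is to show that the number of $d\mid T$ available, weighted by $\phi(d)$, suffices. Optimizing $K$ and $M$ yields $\log P\approx \log x\cdot (4+\eta)\frac{\log\log\log x}{\log\log x}$, which matches the exponent in the statement.

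We then finish as follows. Fix $n$ in the resulting residue class modulo $P$, restricted to $n\le x$. There are about $x/P$ such $n$, which gives the bound $x\exp(-(4+\eta)\tfrac{\log\log\log x}{\log\log x}\log x)$. For each such $n$, every representation $n=p+2^a+2^b$ forces $p\in\calS$, so $p$ is a small prime. We exclude these cases by a further sieve. For each prime $p\in\calS$, the number of $n\le x$ in the class with $n-p$ a sum of two powers of two is at most $(\log x)^2$. Since $|\calS|(\log x)^2=o(x/P)$, removing these $n$ costs nothing.

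The main obstacle is the covering step: producing enough primes with $\ord_r(2)$ dividing $T$ to cover all relevant $(a,b)$ classes while keeping $\log P$ of size $\frac{(4+\eta)\log\log\log x}{\log\log x}\log x$. Our first attempt will be to take $M$ to be a primorial and to use the lower bound $\log|\Phi_d(2)|\ge \phi(d)\log 2-O(2^{\omega(d)})$ to guarantee enough distinct primitive primes. The constant $4$ should come from the divisor-sum estimate $\sum_{d\mid T}\phi(d)=T$ together with $\log T\sim$ the optimization in $K$ and $y$.
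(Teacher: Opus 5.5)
There is a genuine gap: the pairs $(a,b)$ that your congruences leave uncovered are never actually handled. You concede that full coverage is impossible and defer the remaining pairs to an unspecified ``counting argument''. Your final step then asserts that every representation $n=p+2^a+2^b$ forces $p\in\calS$, which is exactly full coverage; the only $n$ you discard are those with $p\mid P$.

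Two things are needed, and the paper does both. First, the uncovered pairs must be few in absolute terms: at most $O(\log x/A)$ of the roughly $(\log x)^2$ pairs, with $A$ a large constant. Second, for each uncovered pair one applies Brun--Titchmarsh in the progression $n\equiv\beta\pmod W$. This bounds the number of $n$ with $n-2^a-2^b$ prime by $\ll\frac{|\calS|}{\log x}\frac{W}{\varphi(W)}$. Summing gives $\ll|\calS|/A$ only if $W/\varphi(W)=O(1)$, i.e.\ $\sum_{q\mid W}1/q=O(1)$.

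This is why the paper takes $q_d=P^+(\Phi_d(2))$ and uses Stewart's bound $q_d>d\exp(c\log d/\log\log d)$, together with $\gamma_s\equiv1\pmod{2^{s+1}}$ for the Fermat primes. Zsigmondy's primitive divisors give only $q_d\equiv1\pmod d$, which does not keep $\sum_d1/q_d$ bounded. Nothing in your plan controls $W/\varphi(W)$.

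The exponent is also asserted rather than derived (``optimizing $K$ and $M$''), and your cost model works against it. With $\log P\asymp 2^KM$ the Fermat cost is multiplied by $M$. Moreover, moduli restricted to divisors of a primorial $\prod_{q\le y}q$ carry total harmonic mass only $\asymp\log y$. Restricting $b$ to a small range is also the wrong reduction; the restriction must be on $b-a$.

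In the paper the construction goes as follows.
\begin{enumerate}[label=(\roman*)]
\item For each $s<m$, one prime $\gamma_s\mid F_s$ with $n\equiv0\pmod{\gamma_s}$ kills every pair with $2^m\nmid b-a$. The cost is $\log W_0<2^m\log 2\le\log x/K$, and what remains is $b=a+t2^m$ with $0\le t\le 2K$.
\item For fixed $t$, $2^a(1+2^{t2^m})\bmod q_d$ depends only on $a\bmod d$. So \emph{every} $d\le M$ that is not a power of $2$ is usable, with no common-modulus constraint.
\item These $d$ are split into disjoint sets $\calD_t$ with $\sum_{d\in\calD_t}1/d\ge\log(AK)$. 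The residues $r_{t,d}$ are chosen randomly so that at most $4H/A$ pairs $(a,t)$ remain uncovered.
\item The costs simply add: $\log W_1\le\sum_{d\le M}d\le M^2$.
\end{enumerate}

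The constant $4$ is $2\times2$. One factor comes from $\log M\approx 2K\log K$ (mass $\log K$ for each of the $2K+1$ values of $t$). The other comes from the requirement $M^2\le(\log x)^{1-c}$, which forces $K\approx\frac{\log\log x}{4\log\log\log x}$. It does not come from $\sum_{d\mid T}\varphi(d)=T$.
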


Compared with Pan's estimate, Theorem~\ref{thm:uncond} replaces the factor
\(\log\log\log\log x/\log\log\log x\) in the exponent by the smaller factor
\(\log\log\log x/\log\log x\).  The new ingredient is a more flexible covering
construction.  We first use Fermat numbers to reduce to pairs with
$b-a=t2^m$.  For each remaining value of $t$, Stewart's theorem on large prime
divisors of cyclotomic values supplies primes $q_d\mid \Phi_d(2)$ with
$\ord_{q_d}(2)=d$ while $\sum_d1/q_d<\infty$.  Randomly chosen residue classes
modulo these $d$ cover almost all possible values of $a$, and the Chinese
remainder theorem then forces the associated prime $p$ into a small exceptional
set.  The few pairs not covered by this probabilistic construction are finally
handled by a Brun--Titchmarsh estimate.

\section{Auxiliary results}

The Fermat numbers
are
\[
  F_s=2^{2^s}+1\qquad(s=0,1,2,\ldots).
\]
They satisfy
\[
  \prod_{s=0}^{m-1}F_s=2^{2^m}-1.
\]

The first lemma records the elementary congruence that will remove many pairs.

\begin{lemma}\label{lem:fermatkill}
Let $s\ge0$, and let $a,b$ be nonnegative integers such that
$b-a=2^s u$ for some positive odd integer $u$. Then
\[
  2^a+2^b\equiv 0\pmod {F_s}.
\]
\end{lemma}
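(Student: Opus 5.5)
We factor out the smaller power of two and then use the basic congruence satisfied by powers of $2$ modulo a Fermat number. Write
\[
  2^a+2^b=2^a\bigl(1+2^{b-a}\bigr)=2^a\bigl(1+2^{2^s u}\bigr).
\]
Every Fermat number $F_s$ is odd, so $\gcd(2^a,F_s)=1$. It therefore suffices to show that $F_s$ divides $1+2^{2^s u}$.

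The key observation is that $2^{2^s}\equiv -1\pmod{F_s}$, which is immediate from the definition $F_s=2^{2^s}+1$. Raising this congruence to the odd power $u$ gives
\[
  2^{2^s u}=\bigl(2^{2^s}\bigr)^u\equiv(-1)^u=-1\pmod{F_s}.
\]
Hence $1+2^{2^s u}\equiv 0\pmod{F_s}$, and multiplying by $2^a$ yields $2^a+2^b\equiv 0\pmod{F_s}$. Alternatively, one can use the factorization $X^u+1=(X+1)(X^{u-1}-X^{u-2}+\cdots+1)$, valid for odd $u$, with $X=2^{2^s}$.

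There is no real obstacle in this argument. The only point needing care is that the oddness of $u$ is exactly what makes $(-1)^u=-1$. If $u$ were even, we would instead get $2^{2^s u}\equiv 1\pmod{F_s}$, so that $1+2^{2^s u}\equiv 2\pmod{F_s}$, and the conclusion would fail.
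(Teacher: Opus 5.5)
Your proof is correct and takes essentially the same approach as the paper: the paper factors $X^u+1=(X+1)(X^{u-1}-X^{u-2}+\cdots+1)$ with $X=2^{2^s}$, which is the same as your congruence $2^{2^s}\equiv -1\pmod{F_s}$ raised to the odd power $u$ (you note this factorization as an alternative yourself). One small remark: $\gcd(2^a,F_s)=1$ is not needed for the direction you prove, since $F_s\mid 1+2^{2^s u}$ already gives $F_s\mid 2^a(1+2^{2^s u})$; it only matters for your side comment that the conclusion fails when $u$ is even.
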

\begin{proof}
 Since $u$ is odd, we have
\[
2^{2^s u}+1=\big(2^{2^s}+1\big)\big(2^{2^s(u-1)}-2^{2^s(u-2)}+\cdots- 2^{2^s}+1\big),
\]
which implies $F_s\mid 2^{2^s u}+1$. If $b-a=2^s u$, then
\[
  2^a+2^b=2^a(1+2^{b-a})=2^a\big(1+2^{2^s u}\big),
\]
so the same divisibility gives the congruence.
\end{proof}

For a prime $q$ and an integer $a$ coprime to $q$, let $\ord_q(a)$ denote the
multiplicative order of $a$ modulo $q$.

The next lemma records the order of $2$ modulo prime divisors of Fermat
numbers.

\begin{lemma}\label{lem:fermatprime}
Let $s\ge0$, and let $\gamma_s$ be any prime divisor of $F_s$. Then
\begin{equation*}
\ord_{\gamma_s}(2)=2^{s+1}.
\end{equation*}
Moreover, for any choice of prime divisors $\gamma_s\mid F_s$,
\[
\sum_s\frac1{\gamma_s}<\infty.
\]
\end{lemma}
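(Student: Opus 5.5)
The plan has two parts: compute the order of $2$ from the congruences modulo $\gamma_s$, then bound the sum using the growth of $\gamma_s$ that the order forces.

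\textbf{The order.} Since $\gamma_s\mid F_s=2^{2^s}+1$, we have $2^{2^s}\equiv -1\pmod{\gamma_s}$. Squaring gives $2^{2^{s+1}}\equiv 1\pmod{\gamma_s}$, so $\ord_{\gamma_s}(2)$ divides $2^{s+1}$ and is therefore a power of $2$, say $2^j$ with $j\le s+1$. If $j\le s$, then $2^{2^s}$ is a power of $2^{2^j}$, so $2^{2^s}\equiv 1\pmod{\gamma_s}$. Combined with $2^{2^s}\equiv-1$ this gives $\gamma_s\mid 2$. That is impossible, because $F_s$ is odd and so $\gamma_s$ is odd. Hence $j=s+1$.

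\textbf{The convergence.} By Fermat's little theorem the order divides $\gamma_s-1$. So $2^{s+1}\mid \gamma_s-1$, and in particular $\gamma_s\ge 2^{s+1}+1>2^{s+1}$. Then
\[
\sum_{s\ge0}\frac1{\gamma_s}<\sum_{s\ge0}2^{-(s+1)}=1<\infty .
\]
This bound holds for every choice of prime divisors $\gamma_s$, which is exactly the uniformity the lemma asks for.

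There is no real obstacle. The only point needing care is excluding a smaller order, and that reduces to $\gamma_s$ being odd. Note that the $\gamma_s$ are automatically distinct, since their orders $2^{s+1}$ differ; this is not needed for the convergence itself.
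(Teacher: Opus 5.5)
Your proposal is correct and follows essentially the same route as the paper: you derive the order from $2^{2^s}\equiv-1\pmod{\gamma_s}$, then use Fermat's little theorem to get $2^{s+1}\mid\gamma_s-1$, and compare the sum with a geometric series. One small improvement on the paper is that you state explicitly that $\gamma_s$ is odd, which is the fact needed to rule out a smaller order and which the paper leaves implicit.
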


\begin{proof}
If $\gamma_s\mid F_s$, then $2^{2^s}\equiv-1\pmod{\gamma_s}$.  Therefore the
order of $2$ modulo $\gamma_s$ divides $2^{s+1}$ but does not divide $2^s$. It is
thus exactly $2^{s+1}$.  By Fermat's little theorem,
$2^{\gamma_s-1}\equiv1\pmod{\gamma_s}$, so the order of $2$ modulo
$\gamma_s$ divides $\gamma_s-1$. Hence
$2^{s+1}\mid \gamma_s-1$, and in particular $\gamma_s\ge 2^{s+1}+1$.
It follows that
\[
\sum_{s}\frac1{\gamma_s}\le \sum_{s}\frac1{2^{s+1}+1}<\infty,
\]
completing the proof of the lemma.
\end{proof}

We write $\Phi_d(X)$ for the $d$-th cyclotomic polynomial defined as
\[
\Phi_d(X)=\prod_{\substack{1\le k\le d\\ (k,d)=1}}\left(X-e^{\frac{2\pi i k}{d}}\right).
\]
It is standard that $\Phi_d(X)\in\mathbb{Z}[X]$, and by definition
$\Phi_d(X)\mid X^d-1$. See, for example, \cite[Section 1.5]{Murty}.

Let $P^+(n)$ be the largest prime factor of $n$ with convention $P^+(1)=1$.
For any $d\ge 3$, let
\begin{align}\label{eq-qd}
q_d=P^+\big(\Phi_d(2)\big).
\end{align}
The following remarkable result of Stewart lies at the heart of the improvement. 
\begin{lemma}\label{lem:stewart}
There is an absolute constant $d_0$ such that
\[
  \sum_{d\ge d_0}\frac1{q_d}<\infty,
\]
and
\(
  \ord_{q_d}(2)=d
\)
for $d>d_0$. 
\end{lemma}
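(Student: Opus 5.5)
The statement has two parts of quite different depth: the order claim, which is elementary, and the convergence claim, which needs Stewart's lower bound for $P^+(\Phi_d(2))$.

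\emph{Order claim.} Let $q=q_d$. Since $\Phi_d(X)\mid X^d-1$, we have $2^d\equiv1\pmod q$, so $e:=\ord_q(2)$ divides $d$. Suppose $e<d$. The standard facts are the following.
\begin{itemize}
\item If a prime $q\mid\Phi_d(2)$ with $\ord_q(2)=e<d$, then $d=eq^k$ for some $k\ge1$.
\item In that case $q$ divides $\Phi_d(2)$ exactly once, apart from the single exception $q=2$, $d=2$.
\end{itemize}
Both follow from the factorization $X^d-1=\prod_{f\mid d}\Phi_f(X)$ and lifting the exponent. Hence $q\mid d$, which gives $q\le d$. On the other hand, $P^+(\Phi_d(2))=q$ together with $q\,\|\,\Phi_d(2)$ forces $\Phi_d(2)=q$. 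But $\Phi_d(2)\ge (2-1)^{\varphi(d)}$ is far too weak, so I would instead use the crude bound
\[
\Phi_d(2)=\prod\bigl|2-\zeta\bigr|\ge 1 .
\]
A better route is the following. Any prime $q\mid\Phi_d(2)$ that does not divide $d$ satisfies $\ord_q(2)=d$ and $q\equiv1\pmod d$. Every other prime factor of $\Phi_d(2)$ is the largest prime of $d$, and it occurs to exponent one. So if $\ord_{q_d}(2)\neq d$, then $\Phi_d(2)\le P^+(d)\le d$. This contradicts $\Phi_d(2)\ge 2^{\varphi(d)}/C$ once $d$ is large. For the bound on $\Phi_d(2)$ I would use
\[
\log\Phi_d(2)=\sum_{f\mid d}\mu(d/f)\log(2^f-1)=\varphi(d)\log2+O(\tau(d)),
\]
together with $\varphi(d)\gg d/\log\log d$. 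This settles the order claim for $d>d_0$. Note also that it already yields $q_d\equiv1\pmod d$, and hence $q_d>d$.

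\emph{Summability.} The trivial bound $q_d\ge d+1$ only gives $\sum1/d$, which diverges, so a genuine improvement is needed. Here I would invoke Stewart's theorem (2013): for sufficiently large $d$,
\[
P^+(\Phi_d(2))>d\exp\!\Bigl(\frac{\log d}{104\log\log d}\Bigr).
\]
This is the deep input, proved by Baker-type lower bounds for $p$-adic linear forms in logarithms (Yu's estimates), and I would simply cite it rather than reprove it. Then
\[
\frac1{q_d}<\frac1d\exp\!\Bigl(-\frac{\log d}{104\log\log d}\Bigr),
\]
and the right-hand side is summable. To see this, split $d$ dyadically: over $d\in[D,2D]$ the sum is $\ll\exp(-\log D/(104\log\log D))$. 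With $D=2^j$ this is $\exp(-cj/\log j)$, which is summable over $j$. Choosing $d_0$ larger than the threshold in Stewart's theorem and in the order argument completes the proof.

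\emph{Main obstacle.} Everything except Stewart's theorem is routine. That theorem cannot be reproved here, so the proof consists of quoting it plus the elementary Zsigmondy-type order argument. The only care needed is to make the exceptional-prime analysis precise, namely that any prime dividing both $\Phi_d(2)$ and $d$ is $P^+(d)$ and divides $\Phi_d(2)$ to exponent one.
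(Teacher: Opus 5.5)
Your proof is correct. The summability half is the paper's argument: both quote Stewart's Theorem 1 with $\alpha=2$, $\beta=1$ (your constant $1/104$ is indeed Stewart's) and then note that $\frac1d\exp(-c\log d/\log\log d)$ is summable.

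You differ on the order claim. The paper gets it directly from Stewart's bound: $q_d>d\exp(c_0\log d/\log\log d)$ gives $q_d>d$, hence $q_d\nmid d$. The standard fact that a prime $q\mid\Phi_d(2)$ with $q\nmid d$ has $\ord_q(2)=d$ then finishes in one line.

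You instead prove the order claim without Stewart, by a Bang--Zsigmondy argument. Primitive prime divisors of $\Phi_d(2)$ are $\equiv1\pmod d$, so they exceed every non-primitive one, and a non-primitive one can only be $P^+(d)$, to exponent one. So if $q_d$ were non-primitive, $\Phi_d(2)\le d$, which contradicts $\log\Phi_d(2)=\varphi(d)\log2+O(\tau(d))$.

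The paper's route is shorter, since Stewart's theorem is needed for the sum anyway. Yours shows the order statement is elementary: it amounts to saying that a primitive prime divisor exists, and it does not depend on the deep input.

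For a clean write-up, remove the abandoned attempts. At that point, the claim that $q\,\|\,\Phi_d(2)$ forces $\Phi_d(2)=q$ is unjustified, and the bound $\Phi_d(2)\ge1$ is useless. Also spell out the step hidden in your ``so''. A prime $q'\nmid d$ dividing $\Phi_d(2)$ would satisfy $q'\ge d+1>q_d$, contradicting the maximality of $q_d$. Hence every prime factor of $\Phi_d(2)$ divides $d$, and therefore equals $P^+(d)$.
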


\begin{proof}
The specialization $\alpha=2$, $\beta=1$ of
\cite[Theorem 1, pp.~293--294]{Stewart2013} gives constants $d_0$ and $c_0>0$
such that, for all $d\ge d_0$,
\[
  q_d>d\exp\left(c_0\frac{\log d}{\log\log d}\right).
\]
In Stewart's homogeneous notation, $\Phi_d(2,1)=\Phi_d(2)$.  Thus
\[
  \frac1{q_d}
  \le
  \frac1d\exp\left(-c_0\frac{\log d}{\log\log d}\right),
\]
implying the convergence of the infinite sum.

Because $\Phi_d(2)$ divides $2^d-1$, we have $2^d\equiv1\pmod {q_d}$.  A
standard property of cyclotomic values says that if a prime $q$ divides
$\Phi_d(2)$ and $q\nmid d$, then $\ord_q(2)=d$ (see e.g., \cite[Exercise 1.5.29]{Murty}).  Here $q_d>d$ for $d>d_0$, so certainly
$q_d\nmid d$.  Hence $\ord_{q_d}(2)=d$.

This completes the proof of the lemma.
\end{proof}

We will also use the following particular case of the Brun-Titchmarsh inequality. See, for example, Halberstam and Richert \cite[Theorem 3.7]{HalberstamRichert}.

\begin{lemma}\label{lem:sieve}
There is an absolute constant $C_1$ satisfying the following estimates. Let $W<x^{1/3}$
be odd, let $\beta\pmod {W}$ be fixed, and let $c$ be an integer with
\[
\gcd(\beta-c,W)=1.
\]
Then
\[
  \#\big\{n\le x:n\equiv\beta\pmodtight {W},\ n-c \ \textrm{is a prime} \big\}
  \le
  C_1\frac{x}{\varphi(W)\log x}.
\]
Consequently, after enlarging $C_1$ if necessary, if
\[
  \calS=\big\{n\le x:n\equiv\beta\pmodtight {W}\big\},
\]
then
\[
  \#\big\{n\in\calS:n-c \ \textrm{is a prime} \big\}
  \le
  C_1\frac{|\calS|}{\log x}\frac{W}{\varphi(W)}.
\]
\end{lemma}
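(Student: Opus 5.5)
We derive Lemma~\ref{lem:sieve} from the classical Brun--Titchmarsh inequality in arithmetic progressions, as stated in Halberstam--Richert \cite[Theorem 3.7]{HalberstamRichert}: for $1\le k<y$ and $\gcd(\ell,k)=1$,
\[
\pi(y;k,\ell)\le \frac{2y}{\varphi(k)\log(y/k)}\bigl(1+O(1/\log(y/k))\bigr).
\]
The argument has two steps: first translate the condition on $n$ into a condition on the prime $p=n-c$, then compare the resulting bound with $|\calS|$.

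\textbf{Step 1: translate to primes.} Put $p=n-c$. The conditions $n\le x$ and $n\equiv\beta\pmod W$ become $p\le x-c$ and $p\equiv \beta-c\pmod W$. The residue $\beta-c$ is coprime to $W$ by hypothesis, so the count is at most $\pi(x-c;W,\beta-c)$.

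We need some control on $c$, since the statement does not bound it.
\begin{itemize}
\item If $c\ge 0$, then $p\le x$ and the count is at most $\pi(x;W,\beta-c)$.
\item If $c<0$, the primes lie in an interval of length $x$, namely $(|c|,\,x+|c|]$. Halberstam--Richert's Theorem~3.7 is stated for primes in an arbitrary interval $(y-z,y]$, with bound $2z/(\varphi(W)\log(z/W))$. This covers both signs of $c$ uniformly with $z=x$.
\end{itemize}
Since $W<x^{1/3}$, we have $\log(x/W)\ge \tfrac23\log x$. The bound is therefore at most $3x/(\varphi(W)\log x)$ for large $x$, and enlarging $C_1$ absorbs the small-$x$ range. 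This gives the first estimate.

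\textbf{Step 2: the consequence.} Because $W<x^{1/3}$, we have $|\calS|\ge \lfloor x/W\rfloor\ge x/(2W)$, so $x\le 2W|\calS|$. Substituting into the first bound gives
\[
C_1\frac{x}{\varphi(W)\log x}\le 2C_1\frac{|\calS|}{\log x}\frac{W}{\varphi(W)}.
\]
Replacing $C_1$ by $2C_1$ finishes the proof.

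\textbf{Main obstacle.} The only delicate point is the uniformity in $c$. This is handled by using the interval form of Brun--Titchmarsh, which depends only on the interval length $x$ and not on where the interval sits. The oddness of $W$ is not needed for the argument.
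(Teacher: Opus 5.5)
Your proposal is correct and follows essentially the paper's route. The paper simply cites the Brun--Titchmarsh inequality (Halberstam--Richert, Theorem 3.7); your two steps supply the routine deduction the paper leaves implicit. The first step shifts to primes $p=n-c$ in the class $\beta-c \pmod W$ over an interval of length $x$, and the second uses $|\calS|\ge\lfloor x/W\rfloor\ge x/(2W)$. You are also right that the oddness of $W$ plays no role.
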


\section{Proof of the theorem}

We will choose a suitable modulus $W$ and residue class $\beta$ such that a positive proportion of
\[
  \calS=\big\{n\le x:n\equiv\beta\pmodtight {2W}\big\}
\]
are not representable as $p+2^a+2^b$, where $p\in \mathcal{P}$ and $a\,b\in \mathbb{N}$.

We first introduce some parameters. Fix $\eta>0$.  Let
\[
  H=\left\lfloor\frac{\log x}{\log2}\right\rfloor,
  \qquad
  K=\left\lfloor \frac{\log\log x}{(4+\eta/2)\log\log\log x}\right\rfloor.
\]
Choose $m$ so that
\[
  K\le \frac{H}{2^m}<2K.
\]

Let $A$ be a large absolute constant, which will be chosen later.  Let
\[
  L=\log(AK).
\]
Choose $\delta>0$ small enough, depending only on $\eta$, so that
\[
  4+2\delta<4+\eta/2.
\]
Now define $M$ by
\[
  \log M=(2+\delta)K\log K.
\]

Write
\[
  \calD^*=\{d_0\le d\le M: d\text{ is not a power of }2\}.
\]
For large $x$,
\[
  \sum_{d\in\calD^*}\frac1d=\log M+O(1)\ge (2+\delta/2)K\log K.
\]
Since $L=\log(AK)=\log K+O_A(1)$, we have
\[
  (2K+1)L\le 2K\log K+O_A(K).
\]
A greedy construction now gives disjoint subsets $\calD_t\subseteq \calD^*$,
indexed by the integers $0\le t\le 2K$, such that
\begin{align}\label{random}
  \sum_{d\in\calD_t}\frac1d\ge L
  \qquad(0\le t\le 2K).
\end{align}

For every $d\in \calD^*$, take the prime $q_d$ defined by \eqref{eq-qd}.  For any $0\le s<m$, choose one prime factor $\gamma_s$ of $F_s$, i.e.,
\(
  \gamma_s\mid 2^{2^s}+1.
\)

\begin{lemma}\label{lem:disinct} The primes $2$, $\gamma_s~(0\le s<m)$ and $q_d ~(d\in \calD^*)$ are pairwise distinct.
\end{lemma}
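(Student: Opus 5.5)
The plan is to separate the primes by the multiplicative order of $2$ modulo each of them. Every prime in the list other than $2$ is odd, so the multiplicative order of $2$ is defined for it. We already know these orders. By Lemma~\ref{lem:fermatprime}, $\ord_{\gamma_s}(2)=2^{s+1}$. By Lemma~\ref{lem:stewart}, $\ord_{q_d}(2)=d$ for $d>d_0$. The boundary case $d=d_0$ needs a small adjustment: either enlarge $d_0$ by one at the outset (harmless, since $\calD^*$ only requires $d\ge d_0$ with $d_0$ absolute), or apply the same argument as in Lemma~\ref{lem:stewart}. Stewart's bound gives $q_{d_0}>d_0$, so $q_{d_0}\nmid d_0$ and hence $\ord_{q_{d_0}}(2)=d_0$. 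Either way, every $q_d$ with $d\in\calD^*$ has order exactly $d$.

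The comparison then splits into three cases. First, $2$ is distinct from all the others. The $\gamma_s$ divide the odd numbers $F_s$, so they are odd. The $q_d$ divide $\Phi_d(2)\mid 2^d-1$, which is odd, so they are odd as well.

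Second, for $s\ne s'$ the primes $\gamma_s$ and $\gamma_{s'}$ have the different orders $2^{s+1}\ne 2^{s'+1}$. Hence they are distinct, since a single prime has a single order of $2$.

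Third, for distinct $d,d'\in\calD^*$ the primes $q_d$ and $q_{d'}$ have the different orders $d\ne d'$, so they are distinct. Finally, $\gamma_s\ne q_d$ because $\ord_{\gamma_s}(2)=2^{s+1}$ is a power of $2$, while $\ord_{q_d}(2)=d$ is not, by the definition of $\calD^*$. This is exactly why powers of $2$ were excluded from $\calD^*$.

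The only point requiring care is the one noted at the start: the order statement in Lemma~\ref{lem:stewart} is phrased for $d>d_0$, whereas $\calD^*$ begins at $d_0$. I would handle it as described there, by observing that Stewart's lower bound $q_d>d$ also holds at $d=d_0$. The rest is immediate.
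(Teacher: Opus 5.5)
Your proof is correct and follows the paper's argument: oddness separates $2$ from the other primes, and the orders $\ord_{q_d}(2)=d$ and $\ord_{\gamma_s}(2)=2^{s+1}$ separate the $q_d$ from each other and from the $\gamma_s$. The differences are minor: you distinguish the $\gamma_s$ by their orders rather than by the pairwise coprimality of Fermat numbers, and you explicitly handle the boundary case $d=d_0$ using Stewart's bound $q_{d_0}>d_0$, which the paper passes over even though Lemma~\ref{lem:stewart} states the order property only for $d>d_0$.
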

\begin{proof}The primes $\gamma_s~(0\le s<m)$ are odd and pairwise distinct because Fermat numbers are odd and pairwise coprime.  

Note that $q_d$ is odd since $q_d\mid \Phi_d(2)$ and $\Phi_d(2)\mid 2^d-1$. Moreover, $q_d ~(d\in \calD^*)$ are pairwise distinct, since $q_{d_1}=q_{d_2}$ would imply $d_1=\ord_{q_{d_1}}(2)=\ord_{q_{d_2}}(2)=d_2$ by Lemma \ref{lem:stewart}.  

Finally, suppose that $q_d=\gamma_s$ for some $d\in \calD^*$ and $0\le s<m$. Then Lemma~\ref{lem:stewart} and Lemma~\ref{lem:fermatprime} give
\[
  d=\ord_{q_d}(2)=\ord_{\gamma_s}(2)=2^{s+1},
\]
which is a contradiction since the element in $\calD^*$ is not a power of $2$. Thus, $q_d\not=\gamma_s$ for any $d\in \calD^*$ and $0\le s<m$.
\end{proof}

Now we introduce 
\[
  W_0=\prod_{0\le s<m}\gamma_s, \ \ \ \ \ W_1=\prod_{0\le t\le 2K}\prod_{d\in\calD_t}q_d
\]
and
\[
W=2W_0W_1.
\]

\begin{proof}[Proof of Theorem~\ref{thm:uncond}] Throughout, we assume that $x$ is sufficiently large.
Fix $t$ and for any $d\in\calD_t$, choose a residue
$r_{t,d}\pmod d$ independently and uniformly at random.  All probabilities
and expectations below are taken over these random residues.  For fixed
$a\in[0,H]$, let $U_{a,t}$ denote the event that $a$ is uncovered for this
value of $t$, that is,
\[
  U_{a,t}=\big\{a\not\equiv r_{t,d}\pmod d
  \text{ for all }d\in\calD_t\big\}.
\]
Then, by \eqref{random},
\begin{align*}
  \mathbb{P}(U_{a,t})
  =
  \prod_{d\in\calD_t}\left(1-\frac1d\right)  
  \le
  \exp\bigg(-\sum_{d\in\calD_t}\frac1d\bigg)
  \le e^{-L}
  =
  \frac1{AK}.
\end{align*}
Let
\[
  \mathbf X_t=\sum_{0\le a\le H}\mathbf 1_{U_{a,t}}.
\]
Then, by linearity of expectation,
\[
  \mathbb{E}\mathbf X_t
  =
  \sum_{0\le a\le H}\mathbb{P}(U_{a,t})
  \le
  \sum_{0\le a\le H}\frac1{AK}
  =
  \frac{H+1}{AK}.
\]
Let $\mathbf X=\sum_{0\le t\le 2K}\mathbf X_t$ be the total number of uncovered pairs $(a,t)$.
Then
\[
  \mathbb{E}\mathbf X
  \le
  \frac{(2K+1)(H+1)}{AK}
  \le \frac{3(H+1)}{A}.
\]
We say $(a,t)$ is an uncovered pair if $a\not\equiv r_{t,d}\pmod d
  \text{ for all }d\in\calD_t$. Therefore, there is a deterministic choice of all
residues $r_{t,d}$ for which the total number $R$ of uncovered pairs satisfies
\begin{align}\label{bounduncovered}
  R \le \frac{3(H+1)}{A}\le \frac{4H}{A}.
\end{align}
We now fix such a deterministic choice of residues $r_{t,d}\pmod d$ for all
$0\le t\le 2K$ and $d\in\calD_t$.

Now we are able to introduce a congruence $\beta\pmod{W}$ as follows. 
In view Lemma \ref{lem:disinct}, one can choose a residue class $\beta\pmod {W}$ such that
\begin{align*}
\begin{cases}
  \beta\equiv1\pmod 2,
  \\ \beta \equiv 0\pmod{W_0},
  \\ \beta\equiv 2^{r_{t,d}}\big(1+2^{t2^m}\big)\pmod{q_d}\ \textrm{ for all }\ d\in \calD_t\ \textrm{ and }\ 0\le t\le 2K.
  \end{cases}
\end{align*}

Now we introduce \[
  \calS=\big\{n\le x:n\equiv\beta\pmodtight {W}\big\}
\]
as in Lemma \ref{lem:sieve}.

Let
\[
  \calE_1=\big\{n\in\calS:n=p+2^a+2^b\text{ for some prime }p\mid W
  \text{ and }0\le a,b\le H\big\}
\]
and 
\[
  \calE_2=\big\{n\in\calS\setminus \calE_1:n=p+2^a+2^b\text{ for some prime }p\nmid W
  \text{ and }0\le a,b\le H\big\}.
\]

Before handling $\calE_1$ and $\calE_2$, we first estimate the size of $W$.  Note that
\[
  \log W_0
  \le
  \log \Big(\prod_{s<m} F_s\Big)
  =
  \log\left(2^{2^m}-1\right)
  <2^m\log2
  \le
  \frac{H\log2}{K}
  \le
  \frac{\log x}{K}.
\]
By the definition of $K$, 
\begin{align}\label{boundW0}
  \log W_0
  \le
  \big(4+2\eta/3\big)\frac{\log\log\log x}{\log\log x}\log x.
\end{align}
Next, since $
\log q_d\le \log \Phi_d(2)<\log 2^d< d$, we have
\[
  \log W_1 \le \sum_{d\in \calD^\ast}q_d
  \le
  \sum_{d\le M}d
  \le M^2.
\]
Moreover,
\[
  2\log M=2(2+\delta)K\log K.
\]
Because
\[
  K\log K=
  \left(\frac{1}{4+\eta/2}+o(1)\right)\log\log x,
\]
our choice of $\delta$ gives
\[
  2\log M\le (1-c_\eta)\log\log x
\]
for some $c_\eta>0$.  Therefore
\[
  M^2\le (\log x)^{1-c_\eta},
\]
and hence
\begin{align}\label{boundW1}
  \log W_1=o\left(\frac{\log\log\log x}{\log\log x}\log x\right).
\end{align}
Combining \eqref{boundW0} and \eqref{boundW1}, we obtain
\begin{align}\label{eq-logW-final}
  \log W=\log 2+\log W_0+\log W_1 &\le 
  \big(4+2\eta/3+o(1)\big)\frac{\log\log\log x}{\log\log x}\log x\nonumber\\
  &<\big(4+\eta\big)\frac{\log\log\log x}{\log\log x}\log x.
\end{align}
In particular, $\log W=o(\log x)$, so
$W<x^{1/3}$.  Additionally,
\begin{align}\label{boundS}
  |\calS|=\frac{x}{W}+O(1).
\end{align}
The above estimates for $W$ and $\calS$  give
\begin{align}\label{eq-negligible}
  |\calE_1|\le (H+1)^2\omega(W)
  \le \left(\frac{\log x}{\log 2}+1\right)^2\frac{\log W}{\log 2}
  \ll (\log x)^3=o(|\calS|).
\end{align}

Take $n\in\calE_2$ and we can express $n$ as
\[
  n=p+2^a+2^b,\qquad p\nmid W
  \qquad 0\le a\le b\le H.
\]
If
$b-a\not\equiv0\pmod {2^m}$, then $b-a=2^su$ for some $s<m$ and $2\nmid u$.  By
Lemma~\ref{lem:fermatkill},
\[
  2^a+2^b\equiv0\pmod {\gamma_s}.
\]
For $n\in \calS$, we have $n\equiv\beta\equiv0\pmod {\gamma_s}$.  Hence, we obtain
\[
  p=n-2^a-2^b\equiv0\pmod {\gamma_s}.
\]
This implies $p=\gamma_s\mid W$ and $n\in \calE_1$, a contradiction to $n\in \calE_2$. Thus, we proved that $b-a\equiv0\pmod {2^m}$. Then we can suppose that
\begin{align*}
  b=a+t2^m,
  \qquad
  0\le t\le 2K.
\end{align*}

We first claim that $(a,t)$ is an uncovered pair. Suppose otherwise, for some $d\in\calD_t$,
\[
  a\equiv r_{t,d}\pmod d.
\]
Since $\ord_{q_d}(2)=d$, we have
\[
  2^a\equiv 2^{r_{t,d}}\pmod {q_d}.
\]
Then we deduce that
\[
  2^a+2^b
  =2^a(1+2^{t2^m})
  \equiv
  2^{r_{t,d}}(1+2^{t2^m})
  \equiv \beta
  \pmod{q_d}.
\]
For $n\in \calS$, we have $n\equiv\beta\pmod {q_d}$. Then we obtain
\[
  p=n-2^a-2^b\equiv0\pmod {q_d}.
\]
This implies $p=q_d\mid W$ and $n\in \calE_1$, a contradiction, and thus the above claim is proved. 

We also claim $(\beta-2^a-2^{a+t2^m},W)=1$. Suppose otherwise  $(\beta-2^a-2^{a+t2^m},W)$ has a prime divisor $\ell$. We deduce that
$p=n-2^a-2^{a+t2^m}\equiv \beta-2^a-2^{a+t2^m}\equiv 0\pmod{\ell}$. Then, $p=\ell$ and a contradiction to $n\in \calE_2$. Thus, $(\beta-2^a-2^{a+t2^m},W)=1$ is true.

Now we conclude from the above claims that 
$$|\calE_2|\le \sum_{(a,t)\ \textrm{is an uncovered pair}}|\{n\in \calS:\ n-2^a-2^{a+t2^m} \ \textrm{is a prime} \}|,$$
and by Lemma~\ref{lem:sieve},
\[
  \big|\big\{n\in\calS:n-2^a-2^{a+t2^m} \ \textrm{is a prime}\big\}\big|
  \le
  C_1\frac{|\calS|}{\log x}\frac{W}{\varphi(W)}.
\]

By \eqref{bounduncovered}, there are at most $\frac{4H}{A}$ uncovered pairs. We obtain
 \begin{align}\label{boundE2first}|\calE_2|\le 
  \frac{4C_1}{A}\cdot \frac{H}{\log x}\cdot |\calS|\cdot\frac{W}{\varphi(W)}.\end{align}
It remains to record the estimate for $W/\varphi(W)$.  By Lemmas~\ref{lem:fermatprime} and~\ref{lem:stewart},
\[
  \sum_{\substack{q\mid W}}\frac1q
  =\frac{1}{2}+
  \sum_{s<m}\frac1{\gamma_s}
  +\sum_{\substack{q\mid W_1}}\frac1q
  \ll 1,
\]
where the letter $q$ denotes a prime,  
and thus
\begin{align}\label{eq-W}
  \frac{W}{\varphi(W)}
  =
  \prod_{\substack{q\mid W}}\left(1-\frac1q\right)^{-1}
  \ll 1.
\end{align}
Recalling $H\le \log_2x$, we conclude from \eqref{boundE2first} and \eqref{eq-W} that 
 \begin{align*}|\calE_2|\le 
  \frac{C_2}{A}\cdot  |\calS|\end{align*}
  for some absolute constant $C_2$ (independent of $A$). 
Choosing the constant $A=4C_2$, we obtain 
 \begin{align}\label{boundE2}\calE_2\le |\calS|/4.\end{align}
 
 Now we obtain from \eqref{boundS}, \eqref{eq-negligible} and \eqref{boundE2} that
\[
  N(x)\ge |\calS|-|\calE_1|-|\calE_2|\ge |\calS|/2\gg \frac{x}{W}.
\]
This proves Theorem~\ref{thm:uncond} in view of \eqref{eq-logW-final}.
\end{proof}

\section*{Acknowledgments}
This work is supported by the National Key Research and Development Program of China (Grant No. 2021YFA1000700) and the National Natural Science Foundation of China (Grant No. 12471088).

\finalauthoraddress{Yuchen Ding}{School of Mathematical Sciences, Yangzhou University, Yangzhou 225002, People's Republic of China\\
HUN-REN Alfr\'ed R\'enyi Institute of Mathematics, Budapest, Pf. 127, H-1364 Hungary}{ycding@yzu.edu.cn}

\finalauthoraddress{Yu-Chen Sun}{School of Mathematics, University of Bristol, Bristol, BS8 1UG, England}{yuchensun93@163.com}

\finalauthoraddress{Lilu Zhao}{School of Mathematical Science, University of Science and Technology of China, Hefei 250100, People's Republic of China}{zhaolilu@ustc.edu.cn}

\begin{thebibliography}{99}
\small
\setlength{\itemsep}{1pt plus 0.2pt}
\setlength{\parsep}{0pt}

\bibitem{CFT} 
Y.-G. Chen, R. Feng and N. Templier, {\it Fermat numbers and integers of the form $a^k+a^l+p$,} Acta Arith. {\bf135} (2008), 51--61.

\bibitem{Corput} 
J. G. van der Corput, {\it On de Polignac's conjecture,} Simon Stevin {\bf 27} (1950), 99--105.

\bibitem{Cr} 
R. Crocker, {\it On the sum of a prime and two powers of two,} Pacific J. Math. {\bf 36} (1971), 103--107.

\bibitem{Erdos2} 
P. Erd\H os, {\it On the integers of the form $2^k+p$ and some related problems,} Summa Brasil. Math. {\bf 2} (1950), 113--123.

\bibitem{Er77} 
P. Erd\H os, {\it  Problems and results on combinatorial number theory. III.}  Number theory day (Proc. Conf., Rockefeller Univ. New York, 1976) (1977), 43--72.

\bibitem{Er80} 
P. Erd\H os, {\it A survey of problems in combinatorial number theory,} Ann. Discrete Math. {\bf 6} (1980), 89--115.

\bibitem{EG} 
P. Erd\H os and R. Graham, {\it Old and new problems and results in combinatorial number theory,} Monographies de L'Enseignement Math\'ematique (1980). 

\bibitem{Er85} 
P. Erd\H os, {\it On some of my problems in number theory I would most like to see solved,} Number theory (Ootacamund, 1984) (1985), 74--84.

\bibitem{Er92} 
P. Erd\H os, {\it Some of my forgotten problems in number theory,}
Hardy-Ramanujan J. {\bf15} (1992), 34--50.

\bibitem{Er95} 
P. Erd\H os, {\it Some of my favourite problems in number theory, combinatorics, and geometry,}
Resenhas {\bf2} (1995), 165--186.

\bibitem{Er97-1} 
P. Erd\H os, {\it Problems in number theory,}
New Zealand J. Math. {\bf26} (1997), 155--160.

\bibitem{Er97-2} 
P. Erd\H os, {\it Some of my favourite unsolved problems,}
Math. Japan. {\bf 46} (1997), 527--537.

\bibitem{Euler} 
L. Euler, {\it Letter to Goldbach, 16.12.1752.} \\
http://eulerarchive.maa.org/correspondence/correspondents/Goldbach.html

\bibitem{Gallagher} 
P. X. Gallagher, {\it Primes and powers of 2,} Invent. Math. {\bf 29} (1975), 125--142.

\bibitem{GS}
A. Granville and K. Soundararajan, {\it A binary additive problem of Erd\H os and the order of $2$ mod $p^2$,} Ramanujan J. {\bf2} (1998), 283--298.

\bibitem{Guy} R. K. Guy, {\it Unsolved Problems in Number Theory,} 2nd edition, Springer, New York, 1994.


\bibitem{Murty}
M. R. Murty, {\it Problems in Analytic Number Theory,} 2nd edition, Grad. Texts in Math. 206, Springer, New York, 2008.


\bibitem{HalberstamRichert}
H. Halberstam and H.-E. Richert,
\newblock \emph{Sieve Methods},
\newblock London Mathematical Society Monographs, No. 4, Academic Press, London, 1974.

\bibitem{Pan} 
H. Pan, {\it On the integers not of the form $p+2^a+2^b$,} Acta Arith. {\bf 148} (2011), 55--61.

\bibitem{Polignac1} 
A. de Polignac, {\it Six propositions arithmologiques d\'eduites du crible d'Eratosth\`ene,}  Nouv. Ann. Math. {\bf 8} (1849), 423--429.

\bibitem{Polignac2} 
A. de Polignac, {\it Rectification \`a une pr\'ec\'edente communication relative \`a la th\'eorie des nombres,} C. R. Acad. Sci. Paris {\bf29} (1849), 738--739.

\bibitem{Romanoff} 
N. P. Romanoff, {\it \"Uber einige S\"atze der additiven Zahlentheorie,} Math. Ann. {\bf 109} (1934), 668--678.

\bibitem{Stewart2013}
C. L. Stewart,
\newblock On divisors of Lucas and Lehmer numbers,
\newblock \emph{Acta Math.} \textbf{211} (2013), no.~2, 291--314.


\end{thebibliography}
\end{document}